\newtheorem{thm}{Theorem.}[section]
\newtheorem{prop}[thm]{Proposition.}
\newtheorem{rem}[thm]{Remark.}
\renewcommand{\em}{\sl}
\renewcommand{\subsection}{\@startsection{subsection}{2}%
{\z@}{-3.25ex plus -1ex minus-.2ex}{-1em}{\bf}} \makeatother
\newcommand{\PP}{\mathbb{P}}
\newcommand{\ZZ}{\mathbb{Z}}
\newcommand{\CC}{\mathbb{C}}
\newcommand{\QQ}{\mathbb{Q}}
\renewcommand{\AA}{\mathbb{A}}
\newcommand{\A}{\mathcal{A}}
\newcommand{\T}{\mathcal{T}}
\newcommand{\K}{\mathcal{K}}
\renewcommand{\L}{\mathcal{L}}
\newcommand{\tr}{{\rm t}}
\newcommand{\MT}{{\rm MT}}
\newcommand{\GL}{{\rm GL}}
\newcommand{\SL}{{\rm SL}}
\newcommand{\MC}{{\rm MC}}
\newcommand{\Mat}{{\rm Mat}}
\newcommand{\diag}{{\rm diag}}
\newcommand{\SO}{{{\rm SO}}}
\newcommand{\Sp}{{\rm Sp}}
\newcommand{\rk}{{\rm rk}}
\newcommand{\id}{{\rm id}}
\newcommand{\J}{{\rm J}}
\newcommand{\la}{{\lambda}}
\renewcommand{\char}{{\rm char}}
\newcommand{\R}{{\mathcal R}}
\newcommand{\B}{{\mathcal B}}
\numberwithin{equation}{section}
\numberwithin{table}{section}
\numberwithin{thm}{section}
\theoremstyle{plain}
\begin{document}

\title{A differential equation with monodromy group $2.J_2$}

\author{Stefan Reiter}
\address[S.~Reiter]{
Department of Mathematics\\
University of Bayreuth\\
95440 Bayreuth\\
Germany}
\email{stefan.reiter@uni-bayreuth.de}

\begin{abstract}
We construct a sixth order differential equation having the central extension of $C_2$ by the Hall-Janko group $J_2$ as monodromy group.
Moreover it arises  from an iterated application of tensor products and convolution operations from a first order differential equation.
\end{abstract}

\keywords{Hall-Janko group, monodromy, convolution} 
\subjclass[2010]{34M50, 20C34, 32S40}
\maketitle

\section{Introduction}

According to \cite[5.6.1]{Wilson} there are two important constructions of the sporadic simple Hall-Janko group $J_2$ of order $604800$,
namely as permutation group on 100 points by Marshall Hall and as a quaternionic reflection group in 3 dimensions
in connection with the Leech lattice.
It is well known that $2.J_2$, the 
central extension of $C_2$ by the
Hall-Janko group $J_2$,
is an irreducible subgroup of $\Sp_6(\CC)$ \cite[p. 42-43]{Atlas}.
Generators of the six dimensional representation of $2.J_2$ over $\QQ(\zeta_5)$
were already
determined by Lindsey II \cite{LindseyII}.
%It was noted there that there exits a  representation over $\QQ(\sqrt{5},\sqrt{-3})$.

Here we show that the group $2.J_2$ appears as a monodromy group of a sixth order differential equation
that can be constructed by an iterated application of tensor products and convolution operations
from a first order differential equation.

Throughout the article,  let $\partial=\frac{d}{dx}, \vartheta=x \partial$ and 
 $L=\sum_{i=0}^n a_i(x) \partial^i  \in \CC(x)[\partial]$ be a differential 
operator. Recall that 
 the {\it adjoint} $L^*$ of $L$  is 
 defined as  $L^*=\sum_{i=0}^n (-\partial)^i  a_i(x) $ and that $L$ is called {\it self adjoint} if 
 $L=(-1)^n L^*.$  If $L$ is self adjoint then the differential Galois group of $ L,$ 
 and hence the monodromy group of $L,$ is 
 contained in the symplectic group $\Sp_n(\CC)$ if $n$ is even \cite{Katz90}.

\begin{thm}\label{L2J_2}
The  formally self adjoint fuchsian operator 
\begin{eqnarray*}
L_{2.J_2}&=&250000\, \left( 6\,\vartheta+5 \right)  \left( 6\,\vartheta-1 \right)  \left( 3\,\vartheta-1
 \right)  \left( 3\,\vartheta+1 \right)  \left( 6\,\vartheta+1 \right)  \left( 6\,\vartheta-5
 \right)- \\
&&125\,x \left( 6\,\vartheta+1 \right)  \left( 6\,\vartheta+5 \right) \cdot \\
&&\left( 
1296000\,{\vartheta}^{4}+2592000\,{\vartheta}^{3}+2578320\,{\vartheta}^{2}+1282320\,\vartheta+213703
 \right)+ \\
&&11664\,x^2 \left( 10\,\vartheta+17 \right)  \left( 5\,\vartheta+7 \right) 
 \left( 10\,\vartheta+11 \right)  \left( 10\,\vartheta+9 \right)  \left( 5\,\vartheta+3
 \right)  \left( 10\,\vartheta+3 \right)
\end{eqnarray*}
has
the Riemann scheme
\[ \R(L_{2.J_2})=\left\{ \begin{array}{ccc}
                     0 & 1 & \infty \\
                  \hline
                   5/6 &  3 & 17/10\\
                   1/3&   5/2&   7/5\\
                   1/6&   2   & 11/10\\ 
                -1/6&    1  & 9/10 \\
              -1/3&    1/2 & 3/5 \\
               -5/6&   0&   3/10
            \end{array}\right\}\] 
and $2.J_2$ as monodromy group.
Moreover, its  monodromy representation $\rho: \pi_1(\PP^1\setminus \{0,1,\infty\},x_0)  \to \Sp_6(\CC)$ 
is uniquely determined  by the local monodromy, i.e. the Jordan forms. 
\end{thm}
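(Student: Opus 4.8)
The plan is to treat the three assertions — the Riemann scheme, the rigidity, and the identification of the monodromy group — in that order, writing $L_{2.J_2}=P_0(\vartheta)+x\,P_1(\vartheta)+x^2P_2(\vartheta)$ with $P_0,P_1,P_2$ the three displayed polynomial factors. Since the operator is expressed through $\vartheta=x\partial$, one reads off the indicial polynomial at $x=0$ directly as $P_0$ and the one at $x=\infty$ as $P_2(-\,\cdot\,)$; their roots give the exponents $\{\pm1/6,\pm1/3,\pm5/6\}$ at $0$ and $\{3/10,3/5,9/10,11/10,7/5,17/10\}$ at $\infty$. For $x=1$ I would localize by the substitution $x=1+t$ and compute the indicial equation at $t=0$, obtaining $\{0,1,2,3\}$ together with $\{1/2,5/2\}$; the Fuchs relation $\binom{6}{2}(3-2)=15=0+6+9$ then confirms that the scheme is complete and that $0,1,\infty$ are the only singularities. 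Formal self-adjointness $L=L^\ast$ is checked from the displayed coefficients, and by the result quoted above it places the monodromy group $G$ inside $\Sp_6(\CC)$.

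The heart of the uniqueness statement is a rigidity count carried out \emph{inside} $\Sp_6$, not $\GL_6$. First I must pin down the local conjugacy classes, and in particular show that, despite the integer gaps among the exponents at $0$ (the pairs $5/6,-1/6$ and $1/6,-5/6$) and at $1$, the Frobenius solutions carry no logarithms, so that $g_0$ and $g_1$ are semisimple; at $\infty$ the six exponents are distinct modulo $\ZZ$, so $g_\infty$ is regular semisimple. The classes are then: at $\infty$, six distinct eigenvalues in symplectic pairs, so $\dim Z_{\Sp_6}(g_\infty)=3$; at $0$, $g_0$ semisimple with eigenvalues $\zeta^{\pm1}$ of multiplicity $2$ and $\zeta^{\pm2}$ of multiplicity $1$ (with $\zeta=e^{\pi i/3}$), so $Z_{\Sp_6}(g_0)\cong\GL_2\times\GL_1$ and $\dim Z_{\Sp_6}(g_0)=5$; and at $1$, $g_1=\diag(1^4,(-1)^2)$, so $Z_{\Sp_6}(g_1)\cong\Sp_4\times\Sp_2$ and $\dim Z_{\Sp_6}(g_1)=13$. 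Since $5+13+3=21=\dim\Sp_6$, the tuple of classes is cohomologically rigid in $\Sp_6$, whence an irreducible symplectic local system on $\PP^1\setminus\{0,1,\infty\}$ with these local classes is unique up to isomorphism. This yields the final assertion. It is worth stressing that the same count in $\GL_6$ gives index $0$: the rigidity is genuinely symplectic, which is exactly why self-adjointness is indispensable here.

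To identify $G$ I would exploit the construction promised in the abstract. Starting from a rank-one Fuchsian operator and applying the prescribed chain of middle convolutions $\MC_\lambda$ and rank-one twists, Katz's algorithm propagates the local data step by step, guarantees irreducibility and rigidity of the outcome, and produces explicit generators $g_0,g_1,g_\infty\in\Sp_6(\CC)$ realizing the classes above. It then suffices to exhibit, inside the known irreducible six-dimensional symplectic representation of $2.J_2$ over $\QQ(\zeta_5)$, a triple $(h_0,h_1,h_\infty)$ with $h_0h_1h_\infty=1$, lying in conjugacy classes with the same Jordan forms, and generating $2.J_2$ irreducibly. Such a triple defines an irreducible symplectic local system with the same local classes; by the rigidity of the preceding paragraph it is isomorphic to the one attached to $L_{2.J_2}$, so the two monodromy representations are conjugate and $G\cong 2.J_2$, which in particular is finite.

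The main obstacle is this last identification. Rigidity reduces everything to matching local conjugacy classes, but one still has to locate the correct classes of $2.J_2$ realizing the prescribed eigenvalues — notably the involution giving $\diag(1^4,(-1)^2)$ at $1$ and the element of order $10$ at $\infty$ — and to verify that the chosen triple generates the whole group rather than a proper irreducible symplectic subgroup with the same element orders. The supporting technical point — the absence of logarithms at the resonant exponents at $0$ and $1$, on which the semisimplicity and hence the rigidity count rests — is the other place that demands genuine care rather than bookkeeping.
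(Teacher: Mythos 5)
Your outline of the Riemann scheme and the centralizer count matches the paper's setup, but the central step of your argument --- ``Since $5+13+3=21=\dim\Sp_6$, the tuple of classes is cohomologically rigid in $\Sp_6$, whence an irreducible symplectic local system with these local classes is unique up to isomorphism'' --- is a genuine gap. The dimension formula (\ref{orthrigid}) is only a \emph{necessary} condition for there being finitely many equivalence classes of irreducible tuples with the given local data (the paper cites \cite[Corollary~3.2]{SV} for exactly this point); cohomological rigidity makes an orbit isolated in the relevant moduli space but does not by itself exclude several isolated orbits, and unlike the $\GL_n$ case there is no general theorem upgrading the $\Sp_6$ count to uniqueness. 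The paper therefore does \emph{not} deduce uniqueness from the count. Instead it applies an explicit sequence of middle convolutions and rank-one twists carrying $\T$ to a symplectically rigid rank-$4$ triple containing a bi-reflection; for such triples uniqueness is known by \cite[Theorem~3.3]{BR2012} (they arise from a linearly rigid hypergeometric tuple via $\Lambda^2$ and $\MC_{-1}$), and since $\MC_\la\circ\MC_{\la^{-1}}\cong\id$ the uniqueness transports back to rank $6$. Without this reduction (or some substitute) both the ``uniquely determined by the local monodromy'' assertion and your subsequent identification of $G$ are unsupported.

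The two points you flag as ``demanding genuine care'' are also exactly where the paper does real work, and they are not optional. The absence of logarithms at the resonant exponents at $0$ and $1$ (semisimplicity of $g_0,g_1$) comes for free in the paper because $L_{2.J_2}$ is \emph{constructed} from the hypergeometric $L_4$ by $\Lambda^2$, $\MC_{-1}$ and the convolution sequence, so the Jordan forms at each stage are dictated by Proposition~\ref{lemmonodromy1}; verifying semisimplicity directly from the displayed coefficients of $L_{2.J_2}$ would be a separate nontrivial computation that your proposal does not carry out. Likewise, generation of all of $2.J_2$ (rather than a proper irreducible subgroup) is settled in the paper by the classification of quasi-primitive finite linear groups generated by bi-reflections \cite[Main Theorem]{Wales78} together with the observation that the trace of the order-$10$ element lies in $\QQ(\zeta_5)\setminus\QQ$, while existence of a triple inside $2.J_2$ with the prescribed Jordan forms is checked via the normalized structure constant. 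As written, your proposal names these obstacles but does not overcome them.
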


\noindent Note that if $P=\sum x^i p_i(\vartheta)$ is any differential operator then it follows 
from  the identities  $\vartheta x^i =x^i (\vartheta+i)$ and $\vartheta^{\ast}=-\vartheta-1$ in  the ring $\CC(x)[\vartheta]$ 
that $P^\ast= \sum x^i p_i(-\vartheta-i-1) .$ 
Hence we obtain for the above differential operator $L_{2.J_2}$ that 
%$$(L_{2.J_2}(\vartheta+1/2)) ^{\ast}= L_{2.J_2}(\vartheta+1/2) ,$$
$$(x^{-1}L_{2.J_2}) ^{\ast}= x^{-1} L_{2.J_2},$$
meaning that 
the operator $L_{2.J_2}$ is formally self adjoint.

Necessary conditions that a tuple of matrices $\T=(T_1,\ldots, T_{r+1}),\; T_1\cdots
T_{r+1}=1,\; \T \in \GL_n(\CC)^{r+1},$
appears as a monodromy tuple of an irreducible fuchsian differential equation
are given by the Scott formula \cite[Theorem 1]{Scott77}:
\begin{eqnarray*}
 \sum_{i=1}^{r+1} \rk(T_i-1)&\geq& 2n, \\
\sum_{i=1}^{r+1} \dim C_{\GL_n(\CC)}(T_i) &\leq & (r-1)n^2+2, 
\end{eqnarray*}
 where $\dim (C_{\GL_n(\CC)}(T_i))$ denotes the dimension of the centralizer of $T_i$ in 
 $\GL_n(\CC)$. 
On the other hand the algorithm of Katz  \cite{katz96} provides a 
simple tool to check whether  
to a given tuple of Jordan forms $(J(T_1),\ldots,J(T_{r+1})) \in \GL_n(\CC)^{r+1}$
 satisfying
\[ \sum_{i=1}^{r+1} \dim C_{\GL_n(\CC)}(J(T_i)) = (r-1)n^2+2 \]
there exists a corresponding irreducible monodromy tuple $\T$.
Such a tuple $\T$ is called linearly rigid and it is  uniquely determined by its tuple of Jordan forms up to simultaneous
conjugation in $\GL_n(\CC)$.
A look in the Atlas of finite groups \cite[p. 43]{Atlas} 
allows to read off the Jordan forms
of the elements of $2.J_2$ in their six-dimensional representation. 
However applying the Scott formula and the
Katz-algorithm it turns out
that there is no linearly rigid irreducible tuple
that generates a subgroup of $2.J_2\subset \Sp_6(\CC)$.

One can weaken the notion of linear rigidity in the following way.
We say that an irreducible monodromy tuple $\T \in \Sp_n(\CC)^{r+1}$ is symplectically rigid
if the  {\rm dimension formula} holds:
\begin{equation}\label{orthrigid}  \sum_{i=1}^{r+1}
{\rm dim}(C_{\Sp_n(\CC)}(T_i))=(r-1)\dim (\Sp_n(\CC)),
\end{equation}
where $C_{\Sp_n(\CC)}(T_i)$ denotes the 
centralizer of local monodromy generator $T_i$ in the algebraic
group $\Sp_n(\CC).$ 
Symplectic rigidity is also a necessary condition for the existence of
only finitely many equivalence classes of irreducible tuples in the
symplectic group 
with given local monodromy \cite[Corollary~3.2]{SV}. 

In the six dimensional representation of  $2.J_2$
we find elements of order $6, 2, 10$ with Jordan forms
\[
(\zeta_6,\zeta_6,\zeta_3,\zeta_3^{-1},\zeta_6^{-1},\zeta_6^{-1}),\quad
(-1,-1,1,1,1,1), \quad
 (\zeta_{10},\zeta_{10}^3,\zeta_5^{3},\zeta_{5}^{-3},\zeta_{10}^{-3},\zeta_{10}^{-1}),\]
where $\zeta_k$ denotes a primitive $k$-th root of unity and
$(\zeta_6,\zeta_6,\zeta_3,\zeta_3^{-1},\zeta_6^{-1},\zeta_6^{-1})$ denotes an element having
Jordan form with two Jordan blocks of size one with  eigenvalues $\zeta_6$, $\zeta_6^{-1}$ resp., and
one Jordan block of size one with eigenvalues $\zeta_3$, $\zeta_3^{-1}$ resp..
Since the centralizer dimensions in $\Sp_6(\CC)$ of the elements are  $5, 13, 3$ resp.,
this triple satisfies the symplectic dimension formula.
The computation of the corresponding
normalized structure constant $n(\T)$, cf. \cite[Chapter I, Theorem~5.8]{MalleMatzat},
yields $n(\T)=1.$
Hence, such a  triple $\T$ in $(2.J_2)^3$ with these Jordan forms and product $1$
exists.
Moreover, this triple in $\Sp_6(\CC)^3$ obviously generates an irreducible subgroup of $2.J_2$,
since there is no invariant subspaces of dimension $1$, $2$ or $3$.  
Thus $\T$ is uniquely determined  in $(2.J_2)^3$ up to simultaneous conjugation.
Since the finite linear quasi-primitive groups generated by bi-reflections have been classified
in \cite{Wales78} it turns out that the generated group is $2.J_2$.

It remains to construct the differential operator $L_{2.J_2}$. 
For this we 
apply the Katz-Existence Algorithm to the monodromy triple $\T$.
Then we end up with a symplectically
rigid triple in dimension $4$ containing a bi-reflection.
This arises from a monodromy triple of a hypergeometric differential equation of order $4$
by taking the wedge product and applying a suitable middle convolution as shown in \cite[Theorem~3.3]{BR2012}.
Thus we are in the linearly rigid (hypergeometric) case.
Therefore this triple $\T \in \Sp_6(\CC)^3$ 
is uniquely determined by its triple of Jordan forms and
can be explicitly constructed from a rank one triple
together with the differential equation using an iterated sequence of tensor products
and convolution operations, see Section~\ref{Sec2}.

\section{Middle convolution}\label{Sec2}

We review some of the properties of the middle convolution
for monodromy tuples and differential operators, cf. \cite[Section 2]{DRFuchsian} and \cite{BR2012}.

The following result is a consequence of the numerology of the middle convolution $\MC_\la$
(cf.~\cite{katz96}):

\begin{prop} \label{lemmonodromy1} Let  $\A=(A_1,\dots,A_{r+1}) \in \GL_n(\CC)^{r+1}$
be an irreducible monodromy tuple of rank $n$
with at least two non trivial elements.
Further let  $\lambda\in \CC^\times \setminus \{1\}.$
Let $(\tilde{B}_1,\dots,\tilde{B}_{r+1})$
be the monodromy tuple $\MC_\la(\A).$ 
Then the following hold:
\begin{enumerate}
\item The monodromy tuple  $\MC_\la(\A)$ is again irreducible of rank
      \[ \rk(\MC_\la(\A))=\sum_{i=1}^r \rk(A_i-1)+\rk(A_{r+1}\lambda^{-1}-1)-\rk(\A).\]

\item Every  Jordan
block $\J(\alpha,l)$ occurring in the Jordan
decomposition of $A_i$ contributes a Jordan block $\J(\alpha
\lambda,l')$ to the Jordan decomposition of $\tilde{B}_i,$ where
$$ l':\;=\quad
  \begin{cases}
    \quad l, &
                              \quad\text{\rm if $\alpha \not= 1,\lambda^{-1}
$,} \\
    \quad  l-1& \quad \text{\rm if $\alpha =1$,} \\
    \quad l+1, & \quad \text{\rm if $\alpha =\lambda^{-1}$.}
  \end{cases}
  $$
  The only other Jordan blocks which occur in the Jordan
  decomposition of 
  $\tilde{B}_i$ are blocks of the form $\J(1,1).$

\item
Every  Jordan block $\J(\alpha^{-1},l)$ occurring in the Jordan
decomposition of  $A_{r+1}$ contributes a Jordan block $\J(\alpha^{-1}
\lambda^{-1},l')$ to the Jordan decomposition of $\tilde{B}_{r+1},$
where
$$ l':\;=\quad
  \begin{cases}
    \quad l, &
                              \quad\text{\rm if $\alpha \not= 1,\lambda^{-1}
$,} \\
    \quad  l+1& \quad \text{\rm if $\alpha =1$,} \\
    \quad l-1, & \quad \text{\rm if $\alpha =\lambda^{-1}$.}
  \end{cases}
  $$
  The only other Jordan blocks which occur in the Jordan
  decomposition of $\tilde{B}_{r+1}$ are blocks of the form $\J(\lambda^{-1},1).$
\item $MC_\la$ preserves linear rigidity.
\end{enumerate}\end{prop}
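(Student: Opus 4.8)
The plan is to reduce all four assertions to the explicit linear-algebra form of Katz's middle convolution given by Dettweiler--Reiter. First I would recall that construction. On $\V=\CC^{nr}$ one forms the convolution matrices $B_1,\dots,B_r$, where $B_k$ equals the identity outside its $k$-th block row and on the $k$-th block row is assembled from the entries $A_j-1$ (for $j\neq k$) together with a $\lambda A_k$ term, and one sets $B_{r+1}=(B_1\cdots B_r)^{-1}$. Inside $\V$ sit two $B_\bullet$-invariant subspaces: the ``finite'' space $\K=\bigoplus_{k=1}^r\K_k$, where $\K_k$ is supported in the $k$-th block and equals $\ker(A_k-1)$ there, and the ``infinite'' space $\L=\bigcap_{k=1}^r\ker(B_k-1)$. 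For $\lambda\neq 1$ one has $\K\cap\L=0$, and $\MC_\lambda(\A)=(\tilde B_1,\dots,\tilde B_{r+1})$ is by definition the tuple induced on the quotient $\V/(\K+\L)$.

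For (i) I would first establish the rank formula by a dimension count. One has $\dim\K=\sum_{k=1}^r\bigl(n-\rk(A_k-1)\bigr)$, while the standard computation of the common fixed space gives $\dim\L=n-\rk(\lambda^{-1}A_{r+1}-1)$. Since $\K\cap\L=0$ this yields
\[
\rk(\MC_\lambda(\A))=nr-\dim\K-\dim\L=\sum_{i=1}^{r}\rk(A_i-1)+\rk(\lambda^{-1}A_{r+1}-1)-n,
\]
which is the asserted formula once one reads $\rk(\A)=n$. For the irreducibility statement I would invoke Katz's theorem that, for $\lambda\neq 1$, $\MC_\lambda$ is an equivalence on the category of irreducible tuples (equivalently, the Dettweiler--Reiter statement that irreducibility is preserved provided $\A$ has at least two non-trivial entries, which rules out the degenerate Kummer-induced case).

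The heart of the argument, and the step I expect to be the main obstacle, is the local numerology (ii) and (iii). For a fixed $i\le r$ I would restrict to a single Jordan block $\J(\alpha,l)$ of $A_i$ and compute directly the Jordan type of its image in the quotient $\V/(\K+\L)$ under $\tilde B_i$. The eigenvalue is uniformly multiplied by $\lambda$; the block size is unchanged when $\alpha\neq 1,\lambda^{-1}$, drops by one when $\alpha=1$ because precisely $\K_i=\ker(A_i-1)$ is removed in the $i$-th slot, and grows by one when $\alpha=\lambda^{-1}$ owing to the extra nilpotent direction produced by the $\lambda A_i$ term, the only leftover blocks being the size-one blocks $\J(1,1)$ created by the interaction with $\L$. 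Assertion (iii) for $\tilde B_{r+1}$ is the mirror image with the roles of $\K$ and $\L$ interchanged; this interchange is exactly what reverses the growth and shrinkage (so $l+1$ and $l-1$ swap) and replaces the leftover $\J(1,1)$'s by $\J(\lambda^{-1},1)$'s. I would treat it either by the same explicit computation at infinity or by conjugating to the convolution-at-infinity picture. The delicate part throughout is the bookkeeping: keeping track of which kernel is quotiented in which slot and counting the leftover size-one blocks correctly.

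Finally, for (iv) I would deduce preservation of linear rigidity from (ii) and (iii). Using the Jordan-form dictionary just established, one computes how $\sum_{i=1}^{r+1}\dim C_{\GL_n(\CC)}(A_i)$ changes and checks that this change exactly compensates the change of the dimension $n\mapsto\rk(\MC_\lambda(\A))$, so that the index of rigidity $(2-(r+1))n^2+\sum_{i=1}^{r+1}\dim C_{\GL_n(\CC)}(A_i)$ is invariant; hence the rigidity value $2$ is preserved. Alternatively this is immediate from Katz's cohomological description of $\MC_\lambda$ as an equivalence preserving the Euler characteristic of the endomorphism local system.
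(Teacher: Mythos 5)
Your outline is correct, but note that the paper itself gives no proof of this proposition: it is stated as ``a consequence of the numerology of the middle convolution (cf.\ Katz)'' and the explicit matrix model is only quoted afterwards, so the actual argument lives in the cited references of Katz and Dettweiler--Reiter. Your plan reproduces exactly that standard argument --- the dimension count $\rk(\MC_\lambda(\A))=nr-\dim\K-\dim\L$ using $\K\cap\L=0$ and $\ker(A_1\cdots A_r\lambda-1)=\ker(A_{r+1}\lambda^{-1}-1)$, the block-by-block Jordan analysis on $\V/(\K+\L)$, and the invariance of the index of rigidity --- and all the claims you make are the right ones; the only caveat is that the delicate Jordan-block bookkeeping in (ii)--(iii), which is the real content, is described rather than carried out, so as written it is a correct reduction to the literature rather than a self-contained proof.
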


We have the following explicit construction over any field $K$ for $\MC_\la(\A)$  in Proposition~\ref{lemmonodromy1}, cf. \cite{DRFuchsian}:

The convolution $C_\lambda(\A)$ of $\A$ with $\la\in K^{\ast}$ is given by the following $r+1$ tuple  of matrices
$\B$
in $\GL_{nr}(K)^{r+1},$
where $B_i-1_{rn}$ is a block-matrix that is zero outside the $i$-th block row, which given by 
\[ ((A_1-1)\la,(A_2-1)\la,\ldots,(A_{i-1}-1)\la,A_i \la-1,A_{i+1}-1,\ldots,A_r-1) \]
for $i=1,\ldots,r$.
Moreover $B_1\cdots B_r-\la$ is the block matrix
\[\diag(A_2 \cdots A_r,A_3 \cdots A_r,\ldots ,1)
      \cdot \lambda \cdot \left(\begin{array}{ccccc}
        A_1-1 & \dots A_r-1 \\
             \vdots & & \\
         A_1-1 & \dots A_r-1
      \end{array}\right). \] 
There are the following  invariant subspaces:
\[ \K=\oplus \ker(A_i-1),\quad \L =\cap_{i=1}^r \ker(B_i-1)=\ker(B_{r+1}-1).\]
If $\la \neq 1 $ then $\L=\{ \diag(A_2\cdots A_r v,A_3\cdots A_r v,\dots,v)\mid v \in \ker(A_1\cdots A_r\la-1)\}$ and
 $\K\cap \L =0$.
The tuple $\MC_\la(\A)$  corresponding to the middle convolution is 
given by the action of $\B$ on $K^{rn}/(\K+\L)$.
Furthermore if $U \leq K^n$ is an $\A$ invariant subspace then $U^r$ is $\B$ invariant.

Let $(\la_1,\ldots, \la_{r+1})\in K^{r+1}$ be monodromy tuple of rank one.
Then we denote by
\[ \MT_{(\la_1,\ldots, \la_{r+1})} (\A):=(\la_1 A_1,\ldots,\la_{r+1} A_{r+1}).\]

Let  $\la_1,\la_2 \in K^{\ast}, \;\la=\la_1 \la_2,$ and
\begin{eqnarray*}
  F&:=&\MT_{(\lambda_2^{-1},1,\ldots,1,\la_2)}\circ C_\lambda \circ \MT_{(\lambda_1^{-1}\lambda_2 ,1,\ldots,1,\la_1 \la_2^{-1})} \circ C_{\lambda^{-1}} \circ \MT_{(\lambda_1,1,\ldots,1,\la_1^{-1})}, \\ 
\tilde{F}&:=&\MT_{(\lambda_2^{-1},1,\ldots,1,\la_2)}\circ \MC_\lambda \circ \MT_{(\lambda_1^{-1}\lambda_2 ,1,\ldots,1,\la_1 \la_2^{-1})} \circ \MC_{\lambda^{-1}} \circ \MT_{(\lambda_1,1,\ldots,1,\la_1^{-1})}. 
\end{eqnarray*}

In \cite[Cor. 5.15 a]{dr00} it is already shown that 
 $\tilde{F}$ 
preserves autoduality, i.e. if $\A$ is contained in a symplectic or an orthogonal group then the same holds for
$\tilde{F}(A)$.
But here we show the refined statement that  $\tilde{F}$ preserves a symmetric bilinear form, resp. an antisymmetric bilinear form.

\begin{thm}\label{form}
 Let $\A \in \GL_n(K)^{r+1}$ be an irreducible monodromy tuple, such that $A_i^{\tr} X A_i=X,\;i=1,\ldots,r,$ for some $0\neq X\in \Mat_n(K)$.
 \begin{enumerate}
  \item Then $\det(X) \neq 0$ and $X^t =X, X^t=-X$ resp.. 
 \item %Further let $\la_1,\la_2 \in K^{\ast}$, $\la=\la_1\la_2 \neq 1$ and
% \[ \B:=\MT_{(\lambda_2^{-1},1,\ldots,1)}\circ C_\lambda \circ \MT_{(\lambda_1^{-1}\lambda_2 ,1,\ldots,1)} \circ C_{\lambda^{-1}} \circ \MT_{(\lambda_1,1,\ldots,1)} \A. \]
 Let $ \B:=F(\A).$
 Then there exists $0\neq Y\in \Mat_{nr^2}(K),\;Y^t= Y, Y^t=-Y  $ resp., such that
 \begin{eqnarray*}
 B_i^{\tr} Y B_i=Y, &&i=1,\ldots,r.
 \end{eqnarray*}
 %Thus the sequence preserves symmetric and antisymmetric bilinear forms.
 Moreover the matrix $Y$ is defined via block-matrices as follows:
 
 \[ Y= D_1^{\tr} Y_0 D_1, \]
 where
  \begin{eqnarray*}
   D_1&=&\diag(D_{11},\ldots,D_{1r}), \\
    D_{11}&=& \diag(A_1 \la_1-1,A_2-1,\ldots,A_r-1), \\
    D_{1i}&=&\diag ((A_1 \la_1-1)\la^{-1},(A_2-1)\la^{-1},\ldots,(A_{i-1}-1)\la^{-1},A_i\la^{-1}-1,A_{i+1}-1,\ldots,A_r-1), \\
     &&\quad i>1,\\
  Y_0&=&(y_{ij})_{i,j=1,\ldots,r},  \\
   {y}_{11}&=& D_{21}^{\tr} H D_{21},\\
  {y}_{1j}&=&D_{31}^{\tr}  H  D_{2j} + D_{4j}^{\tr} H  (1-\la_1/\la_2),\quad 1<j, \\
   {y}_{j1}&=&D_{2j}^{\tr}  H  D_{31} +  H  D_{4j}(1-\la_1/\la_2), \quad 1<j, \\
  {y}_{ij}&=&D_{2i}^{\tr}  H  D_{2j} \la_1/\la_2, \quad 2\leq i,j \leq r,\\% i\neq j \\
  %{y}_{ii}&=& D_{2i}^{\tr} H D_{2i} \la_1/\la_2, \quad 2\leq i\\
     D_{21}&=& \diag(A_1/ \la_2-1,A_1\la_1-1,\ldots,A_1\la_1-1),  \\
     D_{2i}&=&\diag (A_i-1,\ldots,A_i-1),\; i>1,\\
      D_{31}&=& \diag((A_1- \la_1)/\la_2,A_1 \la_1-1,\ldots,A_1 \la_1-1),\\
     D_{4i}&=&\diag (0,A_i^{-1}-1,\ldots,A_{i}^{-1}-1,A_{i}^{-1}\la-1,(A_{i}^{-1}-1)\la, \ldots,,(A_{i}^{-1}-1)\la),\; i>1,\\
   H&=&(h_{ij})_{i,j=1,\ldots,r}  \in \Mat_{rn}(K),\quad  h_{i,j}=X.
  \end{eqnarray*}
Further, if $\char(K)\neq 2$ then
 %  $F$ and
 $\tilde{F}$
 preserves both symmetric and antisymmetric bilinear forms.
 \end{enumerate}
\end{thm}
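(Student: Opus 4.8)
The plan is to prove (i) by a Schur-type argument and (ii) by following the invariant form through the five operations $\MT,C_{\lambda^{-1}},\MT,C_\lambda,\MT$ that compose $F$, recording the outcome in the block matrices $D_1$ and $Y_0$. For (i) I would first observe that the relations $A_i^{\tr}XA_i=X$ for $i=1,\ldots,r$ already force $A_{r+1}^{\tr}XA_{r+1}=X$: chaining $XA_i=(A_i^{\tr})^{-1}X$ gives $X(A_1\cdots A_r)=((A_1\cdots A_r)^{-1})^{\tr}X$, and $A_1\cdots A_r=A_{r+1}^{-1}$ yields the claim. Rewriting the relation as $XA_i=(A_i^{\tr})^{-1}X$ shows $\ker X$ is $\A$-invariant, so by irreducibility and $X\neq0$ we get $\ker X=0$, i.e.\ $\det(X)\neq0$. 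Transposing $A_i^{\tr}XA_i=X$ shows $X^{\tr}$ is invariant too, hence $X^{-1}X^{\tr}$ commutes with every $A_i$; Schur's lemma makes it a scalar $c$, and transposing $X^{\tr}=cX$ gives $c^2=1$, so $X^{\tr}=\pm X$.

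For (ii) the heart is a single full convolution: assuming a tuple preserves a form assembled from $X$, I would compute, directly from the block description of $C_\mu$ recalled above and using only $A_i^{\tr}XA_i=X$, the form preserved by the convolved tuple, and then apply this twice. Here $D_1$ records the first convolution $C_{\lambda^{-1}}$ together with the scaling $\MT_{(\lambda_1,1,\ldots,\lambda_1^{-1})}$ — this is why its blocks involve $A_1\lambda_1-1$ and the factors $\lambda^{-1}$ — while $Y_0$, built from the block matrix $H$ (all of whose blocks equal $X$) and the diagonal matrices $D_{2i},D_{3i},D_{4i}$ carrying the remaining scalings, records the form produced by the second convolution $C_\lambda$. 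Writing $\B=F(\A)$ out explicitly by composing the three $\MT$-twists with the two convolutions, I would then verify $B_i^{\tr}YB_i=Y$ for $i=1,\ldots,r$ by a block computation; the twists by $\lambda_1,\lambda_2$ are arranged precisely so that the scalar factors cancel on the first $r$ block rows, which is why the relation holds only for $i\le r$.

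The symmetry type is then read off from $Y=D_1^{\tr}Y_0D_1$. Since every block of $H$ equals $X$ and $X^{\tr}=\pm X$, one has $H^{\tr}=\pm H$ with the same sign; inspecting the four families of entries shows $y_{11}^{\tr}=\pm y_{11}$, $y_{ij}^{\tr}=\pm y_{ji}$ for $i,j\ge2$, and $y_{1j}^{\tr}=\pm y_{j1}$, all with that single sign, so $Y_0^{\tr}=\pm Y_0$ and hence $Y^{\tr}=\pm Y$, matching the type of $X$.

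For the refined statement about $\tilde F$ I would descend $Y$ to the middle-convolution space: the product relation (as in (i)) upgrades invariance under $B_1,\ldots,B_r$ to invariance under all $B_i$, so ${\rm rad}(Y)$ is $\B$-invariant, and I would check that the subspaces $\K$ and $\L$ quotiented out in each convolution layer lie in ${\rm rad}(Y)$, so that $Y$ induces a form $\bar Y$ on the space of $\tilde F(\A)$. As $\tilde F(\A)$ is irreducible by Proposition~\ref{lemmonodromy1}(i), $\bar Y$ is either $0$ or nondegenerate; once $\bar Y\neq0$ is checked it is nondegenerate and inherits the symmetry type of $Y$, and the hypothesis $\char(K)\neq2$ keeps the symmetric and antisymmetric cases distinct. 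The main obstacle I expect is the two-layer block bookkeeping in verifying $B_i^{\tr}YB_i=Y$, and in particular confirming that $\K$ and $\L$ fall inside ${\rm rad}(Y)$ so that the passage from $F$ to $\tilde F$ is legitimate.
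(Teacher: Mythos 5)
Your proposal follows essentially the same route as the paper: part (i) via invariance of $\ker X$ plus a Schur-type argument giving $X^{\tr}=\gamma X$ with $\gamma^2=1$, part (ii) by a direct block computation of $B_i^{\tr}YB_i=Y$ for $F(\A)$ (which the paper likewise dispatches as a ``straightforward computation''), and the passage to $\tilde F$ by checking that the subspaces $\K$ and $\L$ quotiented out in the middle convolution lie in the radical of $Y$, so the form descends to the irreducible quotient. The plan is sound and matches the paper's proof in all essential steps.
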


\begin{proof}
 \begin{enumerate}
  \item 
Since $\ker(X)$ is $\A$ invariant the irreducibility gives $\det(X) \neq 0$.
 Further, the equality
 $A_i^tX^tA_i=X^t$ implies $X^t =\gamma X$  for some $\gamma \in \bar{K}.$
 Hence $X=(X^t)^t=\gamma^2X$ gives (i).
 \item The claim for $Y$ and $F(\A)$ is a straightforward computation.
 It remains to show the claim for $\tilde{F}(A)$.
 Let 
  \[ \tilde{\A}=\MT_{(\lambda_1^{-1}\la_2,1,\ldots,1,\la_1 \la_2^{-1})} \circ C_{\lambda^{-1}} \circ \MT_{(\lambda_1,1,\ldots,1,\la_1^{-1})} (\A) \]
 and  
 $U =\oplus_{i=1}^r (\ker( \tilde{A}_i-1)+\K+\L),$ where 
 \begin{eqnarray*}
  \K&=& \ker(A_1\la_1-1) \oplus \ker(A_2-1)\oplus \ldots \oplus \ker(A_r-1)  \\
   \L&=&\{ \diag(A_2\cdots A_r v,\dots,v)\mid v \in \ker(\la_1A_1\cdots A_r\la^{-1}-1)\}
 \end{eqnarray*}
 are the $\B$ invariant subspaces that arise in the convolution process.
 %Note that in our case we have  $\L=0$.
 Further,
   \[ \ker(  \tilde{A}_1-1)=(\ker(A_1\la_1^{-1}-1),0,\ldots,0)\]
  is in the kernel of
  $D_{1j}D_{21}^{\tr} H D_{21}D_{11}$ since $(A_1\la_1-1)^\tr H=-H \la_1 A_1^{-1}(A_1\la_1^{-1}-1)$.
 Hence it is straightforward to check that $Y(U)=0$.
 Since $ \tilde{B}\cong \tilde{F}(A)$
 %\[ \tilde{B}=\MT_{(\lambda_2^{-1},1,\ldots,1,\la_2)}\circ MC_\lambda \circ \MT_{(\lambda_1^{-1}\lambda_2 ,1,\ldots,1,\la_1 \la_2^{-1})} \circ MC_{\lambda^{-1}} \circ 
 % \MT_{(\lambda_1,1,\ldots,1,\la_1^{-1})} (\A) \]
 is the irreducible tuple induced by the action of $\B$ on 
 $K^{nr^2}/U$ the claim follows.
\end{enumerate} 
\end{proof}

By the Riemann-Hilbert correspondence, each monodromy tuple $\T\in \GL_n(\CC)^{r+1}$
corresponds to an ordinary Fuchsian differential equation (or, equivalently, an 
operator $L=\sum_{i=0}^mx^iP_i(\vartheta)\in\mathbb{C}[x,\vartheta]$ 
in the Weyl algebra $\CC[x,\vartheta=x\frac{d}{dx}]$) with regular singularities $x_1,\ldots,x_r,x_{r+1}=\infty$). 
Let  $f$ be a solution of $L,$  
viewed as a section of the local  system $\L$ of solutions of $L,$ and let 
 $a\in\QQ\setminus\ZZ$.
 For two simple loops $\gamma_p, \gamma_q,$ based at $x_0\in \AA^1\setminus \{x_1,\ldots,x_r\},$ and   
 moving counterclockwise around $p,$ resp. $q,$ we define the \textit{Pochhammer contour} \[[\gamma_p,\gamma_q]:=\gamma_p^{-1}\gamma_q^{-1}\gamma_p\gamma_q.\]
 For $y\in \AA^1\setminus \{x_1,\ldots,x_r\},$ the integral 
   \begin{equation}\label{int} C^{p}_a(f)(y):=\int_{[\gamma_{p},\gamma_{y}]}f(x)(y-x)^a\frac{dx}{y-x}
   \end{equation} is called the 
   \textit{convolution} of $f$ and $x^a$ with respect to the Pochhammer contour $[\gamma_{p},\gamma_{y}].$ 
In \cite[Prop. 4.10]{BR2012} it is shown that
$C^p_a(f)$ is a solution of
\begin{eqnarray}\label{CaL}
\mathcal{C}_a(L):=\sum_{i=0}^my^i\prod_{j=0}^{i-1}(\vartheta+i-a-j)\prod_{k=0}^{m-i-1}(\vartheta-k)P_i(\vartheta-a)
\in \mathbb{C}[y,\vartheta] 
\end{eqnarray}
for each $p\in\mathbb{P}^1$.
In general $\mathcal{C}_a(L)$ is not irreducible but
the
factor that coincides with the 
differential operator associated to the middle convolution $\MC_\la(\T), \; \la=\exp(2\pi i a),$ via 
the Riemann-Hilbert correspondence
can be often easily determined, cf.  \cite[Cor.~4.16]{BR2012}.

Note further that if $\T$ is a monodromy tuple of $L(\vartheta)$, where $T_1$ is the local monodromy at $0$,
then the tensor product $ \MT_{(\la,1,\ldots,1,\la^{-1})}(\T)$
changes
$L(\vartheta)$ to $L(\vartheta-a)$, $\exp(2\pi i a)=\lambda$. 

\section{Proof of Theorem~\ref{L2J_2}}

\begin{proof}

Let  $\T\in \Sp_6(\CC)^3$ be a monodromy triple with  Jordan forms
\[
(\zeta_6,\zeta_6,\zeta_3,\zeta_3^{-1},\zeta_6^{-1},\zeta_6^{-1}),\quad
(-1,-1,1,1,1,1), \quad
 (\zeta_{10},\zeta_{10}^3,\zeta_5^{3},\zeta_{5}^{-3},\zeta_{10}^{-3},\zeta_{10}^{-1}).\]
Applying the sequence
 \[ \MT_{(\zeta_{5}^{3},1,\zeta_{5}^{-3})} \circ  \MC_{\zeta_{5}^{-3}\zeta_6}  \circ    \MT_{(\zeta_{5}^{-3}\zeta_6^{-1},1,\zeta_6\zeta_5^{3})} \circ  
 \MC_{\zeta_{5}^{3}\zeta_6^{-1}}       \circ   \MT_{(\zeta_6,1,\zeta_6^{-1})}
\]
%  \[ \MT_{(\zeta_{10},1)} \circ  \MC_{\zeta_{10}^{-1}\zeta_3^{-1}}  \circ    \MT_{(\zeta_{10}^{-1}\zeta_3,1)} \circ  
%  \MC_{\zeta_{10}\zeta_3}       \circ   \MT_{(-\zeta_3^{-1},1)}
% \]
we get  we get a  monodromy triple $\tilde{\T} \in \Sp_4(\CC)^3$ with Jordan forms
% or $\SO_4(\CC)^3$ with Jordan forms
\[  (\zeta_{5}^3,\zeta_3,\zeta_3^{-1},\zeta_{5}^{-3}),\quad (-1,-1,1,1),\quad
    (\zeta_{10}^1,\zeta_{10}^3,\zeta_{10}^{-3},\zeta_{10}^{-1})\]
% \[  (\zeta_{10},\zeta_6,\zeta_6^{-1},\zeta_{10}^{-1}),\quad (-1,-1,1,1),\quad
%     (\zeta_5^1,\zeta_5^2,\zeta_5^{-2},\zeta_5^{-1})\]
 by Theorem~\ref{form} and Proposition~\ref{lemmonodromy1}.   
This triple is again symplectically rigid by (\ref{orthrigid}).
Due to \cite[Theorem~3.3]{BR2012} we know that $\tilde{\T}$ is uniquely determined by its Jordan forms.
This also shows the existence and  uniqueness of $\T$ since the middle convolution $\MC_\la$ is invertible, i.e.
$\MC_\la \circ \MC_{\la^{-1}} \cong \id$ by  \cite[Theorem~3.5 and Proposition~3.2]{dr00}.

Moreover it is also shown in \cite[Theorem~3.3]{BR2012} that $\tilde{T}$ arises from a monodromy triple of a hypergeometric differential operator $L_4$ of order $4$
by taking the wedge product and applying the middle convolution $MC_{-1}$.

This allows us to construct the operator $L_{2.J_2}$
by applying the corresponding operations for the fuchsian differential operators
as explained in \cite[Section 4]{BR2012} and indicated in Section~\ref{Sec2}.
We start with the hypergeometric differential operator
\begin{eqnarray*}
L_4&=&256\, \left( 15\,\vartheta-13 \right)  \left( 15\,\vartheta-7 \right)  \left( 15\,\vartheta-8 \right)  \left( 
15\,\vartheta-2 \right) \\
 &&-81 x \, \left( 20\,\vartheta-11 \right)  \left( 20\,\vartheta+13 \right)  \left( 20\,\vartheta-3
 \right)  \left( 20\,\vartheta+1 \right) 
\end{eqnarray*}
with Riemann scheme, cf. \cite[Section 2]{BH}, 
\[ \R(L_4)=\left\{ \begin{array}{ccc}
                     0 & 1 & \infty \\
                  \hline
                   2/15 &  0 & -11/20\\
                   7/15&   1&   -3/20\\
                   8/15&   1   & 1/20\\ 
                13/15&    2  & 13/20 \\
                        \end{array}\right\}.\] 
Since $\Lambda^2 (\SL_4)\cong \SO_6$ we get an operator of degree six with a monodromy group in $\SO_6(\CC)$ having  local monodromy 
\[ (\zeta_5^2,\zeta_3,1,1,\zeta_3^{-1},\zeta_5^{-2}),\quad (\J(2),\J(2),1,1),\quad
    (\zeta_{10},\zeta_{10}^3,-1,-1,\zeta_{10}^{-3},\zeta_{10}^{-1}).\]
One should note that the two differential operators, namely $L_4$ and its dual $L_4^\ast$,  give rise to  equivalent
operators of degree six.

Hence $\MC_{-1}$ yields a symplectically rigid triple of rank $4$ by
Proposition~\ref{lemmonodromy1}  and \cite[Cor. 5.15]{dr00}
with  local monodromy at $0, 1$ and $\infty$
\[  (\zeta_{10},\zeta_6,\zeta_6^{-1},\zeta_{10}^{-1}),\quad (-1,-1,1,1),\quad
    (\zeta_5,\zeta_5^2,\zeta_5^{-2},\zeta_5^{-1}).\]
%latex(factor(Oplist(P4,[dx,x]))); 
The corresponding formally self adjoint operator is then
\begin{eqnarray*}
 P&=&900\, \left( 6\,\vartheta+5 \right)  \left( 10\,\vartheta+1 \right)  \left( 10\,\vartheta+9 \right)  \left( 6\,\vartheta+1 \right)\\
  && -x(6480000\,{\vartheta}^{4}+25920000\,{\vartheta}^{3}
+42051600\,{\vartheta}^{2}+32263200\,\vartheta+9522215)\\
&&+x^2 5184\, \left( 5\,\vartheta+11 \right)  \left( 5\,\vartheta+7 \right)  \left( 5\,\vartheta+8 \right)  \left( 5\,\vartheta+4 \right).
\end{eqnarray*}

Applying $\MT_{(-1,1,-1)}$ we get by uniqueness $\tilde{\T}$.
Further,  the inverse sequence of the beginning of the proof
\[ \MT_{(\zeta_{6}^{-1},1,\zeta_{6})} \circ  \MC_{\zeta_{5}^{-3}\zeta_6}  \circ    \MT_{(\zeta_{5}^{3}\zeta_6,1,\zeta_6^{-1}\zeta_5^{-3})} \circ  
 \MC_{\zeta_{5}^{3}\zeta_6^{-1}}       \circ   \MT_{(\zeta_5^{-3},1,\zeta_5^{3})}
\]

% \[\MT_{(-\zeta_3,1)}\circ \MC_{\zeta_{10}^{-1}\zeta_3^{-1}}  \circ
% \MT_{(\zeta_{10}\zeta_3^{-1},1)}  \circ \MC_{\zeta_{10}\zeta_3}  \circ
% \MT_{(\zeta_{10}^{-1},1)}\]
gives finally $\T$ and $L_{2.J_2}$.
%by Theorem~\ref{lemmonodromy1} ii) and \cite[Cor. 5.15]{dr00}.
Note that applying this sequence to $P$ gives $L_{2.J_2}$ as an irreducible factor of $L_3(\vartheta+1/6),$
where by $(\ref{CaL})$
\[ L_3(\vartheta):=C_{2/5+1/6} (L_2(\vartheta-3/5-1/6+2)),\quad L_2:= C_{3/5+5/6} (P(\vartheta-9/10)). \]
Thus
\begin{eqnarray*}
L_3(\vartheta)&=&6750000\,\vartheta \left( \vartheta-1 \right)  \left( 3\,\vartheta+2 \right)  \left( 3\,\vartheta-1 \right) \left( 6\,\vartheta+1 \right) \left( 2\,\vartheta-1 \right)  \left( 30\,\vartheta-17 \right)  \left( 30\,\vartheta+7
 \right)   \\
 &&-1125 x\,\vartheta \left( 3\,\vartheta+2 \right)  \left( 30\,\vartheta+37 \right)  \left( 30\,\vartheta+13 \right) \cdot \\
 &&\left( 432000\,{\vartheta}^{4}+576000
\,{\vartheta}^{3}+499440\,{\vartheta}^{2}+204960\,\vartheta+20201 \right)\\
&&+16x^2 \, \left( 15\,\vartheta+2 \right)  \left( 15\,\vartheta+23 \right)  \left( 30\,\vartheta+67 \right)  \left( 30\,\vartheta+
37 \right)  \left( 15\,\vartheta+14 \right)  \left( 15\,\vartheta+11 \right) \cdot \\
&& \left( 30\,\vartheta+43 \right)  \left( 30\,\vartheta+13 \right) 
\end{eqnarray*}
and we get the factorization
 \[ L_3(\vartheta) = (30 \vartheta - 17) (30 \vartheta + 7) \; L_{2.J_2}(\vartheta-1/6). \]
 
Since there is a triple  in $(2.J_2)^3$ with  product $1$ and same Jordan forms as  $\T$,
which can be verified by computing the corresponding
normalized structure constant, cf. \cite[Chap. I, Theorem~5.8]{MalleMatzat} and \cite[p. 43]{Atlas},
the uniqueness implies that the monodromy group of $L_{2.J_2}$ is a subgroup of $2.J_2$.
Since the finite linear quasi-primitive groups generated by bi-reflections have been classified
in \cite[Main Theorem]{Wales78} one gets that the generated group is $2.J_2$ since the trace of the element of order $10$ is in $\QQ(\zeta_5)$ but not in $\QQ$.
\end{proof}

Starting with the monodromy group generators of the 
generalized hypergeometric differential equation of order $4$, e.g. taken from
\cite[Theorem~3.5]{BH},
and performing the operations in the above proof on the level of
monodromy tuples, cf. Section~2 or \cite{dr00}, one can also construct the corresponding
monodromy tuple for $L_{2.J_2}$ and the invariant hermitian matrix $H$
over the ring of integers
$\ZZ[i,\frac{1+\sqrt{5}}{2}].$

\begin{rem}
 Starting with the hypergeometric operator
 \begin{eqnarray*}
   L_4&=&16 (2 \vartheta+2 a_1-c_1-1) (2 \vartheta-2 a_1+c_1-1) (2 \vartheta-2 a_1-c_1-1) (2 \vartheta+2 a_1+c_1-1)\\
     & &-x(4 \vartheta+2 (c_3+c_2)+1) (4 \vartheta+2 (c_2-c_3)-1) (4 \vartheta-2 (c_3+c_2)+1) (4 \vartheta+2 (c_3-c_2)-1)
 \end{eqnarray*}
we get analogously to the proof of Theorem~1.1 the formally adjoint operator of degree $6$
%lprint(factor(Diffop2CY(subs(b=a_1,a=c_1,c=c_2,d=c_3,PPP7),[dx,x])));
% \begin{eqnarray*} 
%  P&=&64 (X-a_1) (X+a_1) (X-2 a_1) (X+2 a_1) (X+1+a_1) (X-1-a_1)\\
%    &&-x(X+1+a_1) (X-a_1) (128 X^4-256 X^2 a_1^2-64 X^2 c_1^2-64 X^2 c_2^2-64 X^2 c_3^2-256 a_1^4
% +128 a_1^2 c_1^2+128 a_1^2 c_2^2+128 a_1^2 c_3^2-16 c_1^4+32 c_1^2 c_2^2+32 c_1^2 c_3^2-16 c_2^4+32 c_2^2 c_3^2-16 c_3^4+256 X^3-256 X a_1^2-64 X c_1^2
% -64 X c_2^2-64 X c_3^2+304 X^2-96 a_1^2-24 c_1^2-24 c_2^2-24 c_3^2+176 X+39)\\
%  &&+x^2 64 (X+1+c_3) (X+1-c_3) (X+1+c_2) (X+1-c_2) (X+1+c_1) (X+1-c_1)
% \end{eqnarray*}

\begin{eqnarray*} 
 L&=&64 ( \vartheta-a_1) ( \vartheta+a_1) ( \vartheta-2 a_1) ( \vartheta+2 a_1) ( \vartheta+1+a_1) ( \vartheta-1-a_1)\\
   &&-x( \vartheta+1+a_1)( \vartheta-a_1) \cdot \\
   &&(128  \vartheta^4+256  \vartheta^3+  \vartheta^2(-64 v_1+304)+ \vartheta (-64v_1+176)-32 v_2+16 v_1^2-24 v_1+39)\\
&&+x^2 64 ( \vartheta+1+c_3) ( \vartheta+1-c_3) ( \vartheta+1+c_2) ( \vartheta+1-c_2) ( \vartheta+1+c_1) ( \vartheta+1-c_1),
\end{eqnarray*}
where
\begin{eqnarray*} 
 v_1=(2a_1)^2+c_1^2+c_2^2+c_3^2, && v_2=(2a_1)^4+c_1^4+c_2^4+c_3^4, 
 \end{eqnarray*}
with Riemann scheme
\[\R(L)=\left\{ \begin{array}{ccc}
                     0 & 1 & \infty \\
                  \hline
                   1+a_1 &  3 & c_3+1\\
                   2a_1&   5/2&  c_2+1\\
                   a_1&   2   & c_1+1\\ 
                -a_1&    1  & -c_1+1 \\
              -2a_1&    1/2 & -c_2+1 \\
               -1-a_1&   0&   -c_3+1
            \end{array}\right\}.\] 

Specializing the parameters $(a_1, c_1, c_2, c_3)$ we get further examples of operators having monodromy group $2.J_2$:

\[
 \begin{array}{cl|}
  a_1 & (c_1,c_2,c_3)  \\ 
  -1/6  & (19/20,9/20,3/4) \\
        & (13/20,3/20,3/4) \\
        & (9/10,7/10,3/5) \\
         & (9/10,7/10,4/5) \\
         &\\
  \end{array}  
  \begin{array}{cl|}
   a_1 & (c_1,c_2,c_3)  \\
    -1/5 & (11/12,7/12,3/4) \\
       & (6/7,5/7,4/7) \\
       & (17/20,13/20,3/4) \\
        & (7/12,1/12,2/3) \\
         & (14/15,11/15,2/3) \     
      \end{array}  
  \begin{array}{cl}    
   a_1 & (c_1,c_2,c_3)  \\
   -2/5 & (11/12,7/12,3/4) \\
       & (6/7,5/7,4/7) \\
       & (19/20,9/20,3/4) \\
        & (7/12,1/12,2/3) \\
         & (13/15,8/15,2/3) \\     
    \end{array}          
\]
We also get the following irreducible operators having finite monodromy group contained
in $2.J_2$, cf. \cite[p. 42]{Atlas}:
\[
 \begin{array}{ccccc}
  a_1 & (c_1,c_2,c_3) & \mbox{ group }\\
  -1/6 & (11/12,7/12,3/4) & 2^{3+4}:(3 \times S_3) \\% & t24n5045 imprimitive\\
       & (13/14,11/14,9/14) & 2 \times U_3(3) \\
       & (7/8,5/8,3/4) &  2 \times U_3(3) 
\end{array}\]

The group    $2^{3+4}:(3 \times S_3)$ is an imprimitive subgroup of $2.J_2 \subseteq \Sp_6(\CC)$ of order $2^8 \cdot 3^2$.    
It is a transitive group on $24$ points and has the  transitive group identification number $5045$.
 \end{rem}


\begin{thebibliography}{10}

\bibitem{BH}
F.~Beukers and G.~Heckman.
\newblock Monodromy for the hypergeometric function $\sb n {F}\sb{n-1}$.
\newblock {\em Invent. Math.}, 95(2):\ 325--354, 1989.

\bibitem{BR2012}
M.~Bogner and S.~Reiter.
\newblock {On symplectically rigid local systems of rank four and Calabi-Yau
  operators}.
\newblock {\em J. Symb. Comput.}, 48:64--100, 2012.

\bibitem{Atlas}
J.~H. Conway, R.~T. Curtis, S.~P. Norton, R.~A. Parker, and R.~A. Wilson.
\newblock {\em Atlas of finite groups. Maximal subgroups and ordinary
  characters for simple groups. With computational assistance from J. G.
  Thackray}.
\newblock Oxford University Press, 1985.

\bibitem{dr00}
M.\ Dettweiler and S.\ Reiter.
\newblock An algorithm of {K}atz and its application to the inverse {G}alois
  problem.
\newblock {\em J. Symb. Comput.}, 30:\ 761--798, 2000.

\bibitem{DRFuchsian}
M.~Dettweiler and S.~Reiter.
\newblock {Middle convolution of Fuchsian systems and the construction of rigid
  differential systems}.
\newblock {\em Journal of Algebra}, 318:1--24, 2007.

\bibitem{LindseyII}
J.~H.~Lindsey II.
\newblock {A new lattice for the Hall-Janko group}.
\newblock {\em Proc. Amer. Math. Soc.}, 103:703--709, 1988.

\bibitem{Katz90}
N.M.\ Katz.
\newblock {\em Exponential sums and differential equations}.
\newblock Annals of Mathematics Studies 124. Princeton University Press, 1990.

\bibitem{katz96}
N.M.\ Katz.
\newblock {\em Rigid Local Systems}.
\newblock Annals of Mathematics Studies 139. Princeton University Press, 1996.

\bibitem{MalleMatzat}
G.\ Malle and B.H.\ Matzat.
\newblock {\em Inverse {G}alois Theory}.
\newblock Monographs in Mathematics. Springer-Verlag, 1999.

\bibitem{Scott77}
L.~L. Scott.
\newblock Matrices and cohomology.
\newblock {\em Ann. {M}ath.}, 105:473--492, 1977.

\bibitem{SV}
K.~Strambach and H.~V\"olklein.
\newblock On linearly rigid tuples.
\newblock {\em J. Reine Angew. Math.}, 510:57--62, 1999.

\bibitem{Wales78}
D.~B. Wales.
\newblock { Linear groups of degree n containing an involution with two
  eigenvalues −1. II}.
\newblock {\em J. Algebra}, 53(1):\ 58–67, 1978.

\bibitem{Wilson}
R.~A. Wilson.
\newblock {\em The finite simple groups}.
\newblock Number 251 in Graduate Texts in Mathematics. Springer-Verlag, 2009.

\end{thebibliography}
\end{document}